\theoremstyle{plain}
 \newtheorem{thm}{\textbf{Theorem}}[section]
 \newtheorem{cor}{\textbf{Corollary}}[section]
\theoremstyle{definition}
\theoremstyle{remark}
 \newtheorem{rem}{\textbf{Remark}}[section]
 \numberwithin{equation}{section}
\renewcommand{\leq}{\leqslant}
\renewcommand{\geq}{\geqslant}
\title{A Note on the Spectral Area of Toeplitz Operators}
\subjclass[2010]{Primary 30,47}
\author[]{Cheng Chu}
\address{
Department of Mathematics \\ 
Washington University in Saint Louis  \\ 
Saint Louis, Missouri}
\email{chengchu@math.wustl.edu}
\author[]{Dmitry Khavinson}
\address{
Department of Mathematics \\
University of South Florida \\
Tampa, Florida}
\email{dkhavins@usf.edu}
\begin{document}
\setcounter{page}{1}
\thispagestyle{empty}

\begin{abstract}
In this note, we show that for hyponormal Toeplitz operators, there exists a lower bound for the area of the spectrum. This extends the known estimate for the spectral area of Toeplitz operators with an analytic symbol.
\end{abstract}

\maketitle

\section{Introduction}  

Let $\DD$ be the open unit disk in the complex plane. Let $L^2$ denote the Lebesgue space of square integrable functions on the unit circle $\partial\DD$. The Hardy space $H^2$ is the subspace of $L^2$ of analytic functions on $\DD$. Let $P$ be the orthogonal projection from $L^2$ to $H^2$.
For $\varphi\in L^\infty$, the space of bounded Lebesgue measurable functions on $\partial\DD$, the Toeplitz operator $T_{\varphi}$ and the Hankel operator $H_\varphi$ with symbol $\varphi$ are defined on $H^2$ by $$T_{\varphi}h=P(\varphi h),$$ and
\beq\label{Han}
H_{\varphi}h=U(I-P)(\varphi h),
\eeq
for $h\in H^2$.
Here $U$ is the unitary operator on $L^2$ defined by $$Uh(z)=\bz h (\bz).$$

Recall that the spectrum of a linear operator $T$, denoted as $sp(T)$, is the set of complex numbers $\lambda$ such that $T - \lambda I$ is not invertible; here $I$ denotes the identity operator. Let $[T^*,T]$ denote the operator $T^*T-TT^*$, called the self-commutator of $T$. An operator $T$ is called hyponormal if $[T^*,T]$ is positive.
Hyponormal operators satisfy the celebrated Putnam inequality \cite{put70}
\begin{thm}\label{P}
If $T$ is a hyponormal operator, then
$$
\|[T^*,T]\|\leq \frac{Area(sp(T))}{\pi}.
$$
\end{thm}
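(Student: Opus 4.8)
I sketch the approach I would take to prove this (Putnam's inequality).

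Since $D:=[T^*,T]$ is a positive operator, $\|D\|=\sup\{\langle De,e\rangle:\|e\|=1\}$, so it suffices to show $\langle De,e\rangle\le Area(sp(T))/\pi$ for every unit vector $e$; fix such an $e$. The first point is that $T-\lambda$ is hyponormal for every $\lambda\in\mathbb C$, with the same self-commutator $[(T-\lambda)^*,T-\lambda]=D$. For $\lambda\notin sp(T)$ put $u(\lambda)=(T-\lambda)^{-1}e$; writing $D=(T-\lambda)^*(T-\lambda)-(T-\lambda)(T-\lambda)^*$ and using $(T-\lambda)u(\lambda)=e$ gives the pointwise identity
$$
\langle D\,u(\lambda),u(\lambda)\rangle=\|e\|^2-\|(T-\lambda)^*u(\lambda)\|^2=1-\|(T-\lambda)^*u(\lambda)\|^2\in[0,1].
$$
Moreover $u$ is a holomorphic $H$-valued function on $\mathbb C\setminus sp(T)$ with $u(\lambda)=-\lambda^{-1}e+O(\lambda^{-2})$ near infinity, so
$$
\langle De,e\rangle=\|D^{1/2}e\|^2=\lim_{\lambda\to\infty}|\lambda|^2\,\langle D\,u(\lambda),u(\lambda)\rangle;
$$
and, since hyponormal operators satisfy $\|(T-\lambda)^{-1}\|=\operatorname{dist}(\lambda,sp(T))^{-1}$, one has $\|u(\lambda)\|\le\operatorname{dist}(\lambda,sp(T))^{-1}$.

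The plan is to bring in $Area(sp(T))$ through an integration over the plane. Set $a(\lambda)=\langle D\,u(\lambda),u(\lambda)\rangle=\|D^{1/2}u(\lambda)\|^2$ on $\Omega:=\mathbb C\setminus sp(T)$. Being the squared norm of a holomorphic vector-valued function, $a$ is subharmonic, with $\Delta a=4\,\|D^{1/2}u'(\lambda)\|^2$; it satisfies $0\le a\le 1$; and near infinity $a(\lambda)=|\lambda|^{-2}\langle De,e\rangle+O(|\lambda|^{-3})$, so that $\langle De,e\rangle$ is recovered from the expansion of $a$ at infinity. A natural companion object is the ``defect field'' $w(\lambda)=(T-\lambda)^*u(\lambda)$: it is not holomorphic, but it satisfies $\partial_{\bar\lambda}w(\lambda)=-u(\lambda)$ on $\Omega$, while $\|w(\lambda)\|^2=1-a(\lambda)$. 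I would feed these into a Green/Stokes computation on $\{|\lambda|<R\}$ and let $R\to\infty$, expressing the coefficient $\langle De,e\rangle$ of $a$ at infinity through the Riesz mass of $a$ --- that is, through $\tfrac{2}{\pi}\iint_\Omega\|D^{1/2}u'(\lambda)\|^2\,dA(\lambda)$ --- and then use the relation $\partial_{\bar\lambda}w=-u$ together with the two-sided bound $0\le a\le 1$ to majorize that mass by $Area(sp(T))/\pi$. Taking the supremum over unit vectors $e$ then gives $\|D\|\le Area(sp(T))/\pi$.

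The hard part is making this integration rigorous. The passage $R\to\infty$ must be justified, but the real difficulty is the behaviour near $\partial\,sp(T)$: the crude bound $\|D^{1/2}u'(\lambda)\|\le\|D^{1/2}\|\operatorname{dist}(\lambda,sp(T))^{-2}$ is not integrable near $\partial\,sp(T)$, and for a general bounded subharmonic function the Riesz mass can be infinite, so one must exploit the cancellation built into the holomorphy of $u$ and into the relation $\partial_{\bar\lambda}w=-u$ --- for instance by first carrying out the computation on $\{\operatorname{dist}(\lambda,sp(T))>\delta\}\cap\{|\lambda|<R\}$ and only afterwards letting $R\to\infty$ and $\delta\downarrow 0$, or by approximating $sp(T)$ from outside by smoothly bounded compact sets. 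It is worth stressing that this structural input is genuinely needed: for an arbitrary bounded holomorphic $H$-valued function with the same expansion at infinity the conclusion fails (let $sp(T)$ degenerate to a line segment), so it is precisely the identity $\partial_{\bar\lambda}w=-u$, i.e.\ the hyponormality of $T$, that forces the upper bound to be the area of $sp(T)$ rather than a smaller conformal quantity.
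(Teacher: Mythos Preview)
The paper does not prove this theorem; it is quoted with a citation to Putnam's 1970 paper and then used as a black box in the corollary. There is thus no proof in the paper to compare your proposal against.

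As for the sketch itself: the setup is sound --- the identity $\langle Du(\lambda),u(\lambda)\rangle=1-\|(T-\lambda)^{*}u(\lambda)\|^{2}$, the subharmonicity of $a(\lambda)=\|D^{1/2}u(\lambda)\|^{2}$, the asymptotics $a(\lambda)\sim\langle De,e\rangle\,|\lambda|^{-2}$ at infinity, and the relation $\partial_{\bar\lambda}w=-u$ with $\|w\|^{2}=1-a$ are all correct. But you explicitly leave the crux unfinished: you never carry out the Green/Stokes computation, and you do not show \emph{how} the structural identity $\partial_{\bar\lambda}w=-u$ together with $\|w\|\le 1$ converts the Riesz-mass expression into the bound $Area(sp(T))/\pi$. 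You yourself point out that subharmonicity plus $0\le a\le 1$ is insufficient (the line-segment example, which in the scalar case is just the fact that analytic capacity can be positive for sets of zero area), so everything hinges on precisely the step you omit. As written this is a program, not a proof; the genuine gap is the passage from ``here are the right objects and identities'' to an actual inequality involving $Area(sp(T))$.
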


Notice that a Toeplitz operator with analytic symbol $f$ is hyponormal, and it is well known that $sp(T_f)=\overline{f(\DD)}$. The lower bounds of the area of $sp(T_f)$ were obtained in \cite{kh(T)} (see \cite{ax85},\cite{ale72} \cite{stan86} and \cite{stan89} for generalizations to uniform algebras and further discussions). Together with Putnam's inequality such lower bounds were used to prove the isoperimetric inequality (see \cite{ben14},\cite{ben06} and the references there). Recently, there has been revived interest in the topic in the context of analytic Topelitz operators on the Bergman space (cf. \cite{bell14}, \cite{ol14} and \cite{fl15}). Together with Putnam's inequality, the latter lower bounds have provided an alternative proof of the celebrated St. Venant's inequality for torsional rigidity.

In the general case, Harold Widom \cite{wid64} proved the following theorem for arbitrary symbols.
\begin{thm}
Every Toeplitz operator has a connected spectrum.
\end{thm}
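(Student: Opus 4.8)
The plan is to reduce the statement, via the Fredholm theory of $T_\varphi$, to the connectedness of its essential spectrum. One may assume $\varphi$ is not a.e.\ constant, since otherwise $T_\varphi=\mu I$ and $sp(T_\varphi)$ is a single point. I would recall two classical facts about Toeplitz operators. First: if $T_\psi$ is invertible then $\psi^{-1}\in L^\infty$; indeed $\|T_\psi h\|\le\|\psi h\|$ for $h\in H^2$, and since $T_\psi$ is bounded below we get $\|\psi h\|\ge c\|h\|$ on $H^2$, and applying this to $\bar z^{n}h$ and using that multiplication by $\bar z^{n}$ is unitary on $L^2$, the estimate passes to $\bigcup_n \bar z^{n}H^2$, which is dense in $L^2$, so $|\psi|\ge c$ a.e. Second, Coburn's lemma: if $\psi\not\equiv 0$ then $\ker T_\psi=\{0\}$ or $\ker T_\psi^*=\{0\}$; for if $h\in\ker T_\psi$ and $g\in\ker T_\psi^{*}$ are both nonzero, then $\psi h\perp H^2$ and $\bar\psi g\perp H^2$ force $\psi h\bar g$ to have Fourier spectrum contained in both $\{n\le -1\}$ and $\{n\ge 1\}$, hence $\psi h\bar g\equiv 0$, hence $\psi\equiv 0$ because a nonzero $H^2$ function vanishes only on a null set. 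Combining these two facts with the open mapping theorem yields, for non-constant $\varphi$: if $T_{\varphi-\lambda}$ is Fredholm of index $0$, then it is invertible. Hence
\[
sp(T_\varphi)=sp_{\mathrm{ess}}(T_\varphi)\ \cup\ \{\lambda:\ T_{\varphi-\lambda}\text{ Fredholm, } \operatorname{ind}T_{\varphi-\lambda}\ne 0\},
\]
where $sp_{\mathrm{ess}}(T_\varphi)=\{\lambda:\ T_{\varphi-\lambda}\text{ not Fredholm}\}$.

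Next, $\lambda\mapsto T_{\varphi-\lambda}$ is norm continuous, so $\mathbb{C}\setminus sp_{\mathrm{ess}}(T_\varphi)$ is open and $\operatorname{ind}T_{\varphi-\lambda}$ is locally constant on it; thus the second set above is a union of connected components of $\mathbb{C}\setminus sp_{\mathrm{ess}}(T_\varphi)$, and since $T_{\varphi-\lambda}$ is invertible for $|\lambda|>\|T_\varphi\|$ none of them is the unbounded component. So each such component is a bounded complementary component of the compact set $sp_{\mathrm{ess}}(T_\varphi)$, and its topological boundary lies in $sp_{\mathrm{ess}}(T_\varphi)$. It follows that if $sp_{\mathrm{ess}}(T_\varphi)$ is connected, then $sp(T_\varphi)$ is obtained from a connected set by adjoining connected pieces each clustering at it, and hence is connected. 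In particular, it suffices to prove that $sp_{\mathrm{ess}}(T_\varphi)$ is connected for every $\varphi\in L^\infty$.

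For $\varphi$ continuous this is immediate: $T_\varphi$ is Fredholm exactly when $\varphi$ is zero-free, so $sp_{\mathrm{ess}}(T_\varphi)=\varphi(\partial\DD)$, the continuous image of a connected set, and one recovers the familiar description $sp(T_\varphi)=\varphi(\partial\DD)\cup\{\lambda:\ \operatorname{wind}(\varphi,\lambda)\ne 0\}$. The main obstacle is the general $L^\infty$ case, where the essential spectrum is no longer the range of $\varphi$ and is governed by the far subtler Fredholm criterion for $L^\infty$ symbols (distance of $\varphi/|\varphi|$ to $H^\infty$, Wiener--Hopf type factorization of the symbol). Here I would work in the Calkin algebra and use a localization principle to describe $sp_{\mathrm{ess}}(T_\varphi)$ through the local behavior of $\varphi$, and then verify that this description admits no separation into two nonempty relatively clopen sets. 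That last step carries essentially all of the analytic content of the theorem, and it is what Widom establishes in \cite{wid64}.
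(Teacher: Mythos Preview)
The paper does not prove this theorem at all; it is quoted as background and attributed to Widom with the citation \cite{wid64}, with no proof environment following the statement. So there is nothing in the paper to compare your attempt against.

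Your outline is the standard reduction and is correct as far as it goes: Coburn's lemma gives that a Fredholm $T_{\varphi-\lambda}$ of index $0$ is invertible, so $sp(T_\varphi)$ equals $sp_{\mathrm{ess}}(T_\varphi)$ together with a union of bounded complementary components of $sp_{\mathrm{ess}}(T_\varphi)$; hence connectedness of the spectrum follows from connectedness of the essential spectrum. (Incidentally, the first ``classical fact'' you recall---that invertibility of $T_\psi$ forces $\psi^{-1}\in L^\infty$---plays no role in this chain; only Coburn's lemma is used.) But you yourself defer the crux, the connectedness of $sp_{\mathrm{ess}}(T_\varphi)$ for general $\varphi\in L^\infty$, back to \cite{wid64}. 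So your proposal is not a self-contained proof either: like the paper, it ultimately rests on Widom's argument, only with more of the surrounding Fredholm-theoretic scaffolding spelled out.
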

The main purpose of this note is to show that for a rather large class of Topelitz operators on $H^2$, hyponormal operators with a harmonic symbol, there is
still a lower bound for the area of the spectrum, similar to the lower bound obtained in \cite{kh(T)} in the context of uniform algebras.

We shall use the following characterization of the hyponormal Toeplitz operators given by Cowen in \cite{cow88}
\begin{thm}\label{cow}
Let $\varphi\in L^\infty$, where $\varphi=f+\bar{g}$ for $f$ and $g$ in $H^2$. Then $T_{\varphi}$ is hyponormal if and only if
$$
g=c+T_{\bh}f,
$$
for some constant $c$ and $h\in H^\infty$ with $\|h\|_{\infty}\leq 1.$
\end{thm}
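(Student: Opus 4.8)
The plan is to reduce hyponormality of $T_\varphi$ to a norm comparison between two Hankel operators, recast the desired conclusion $g=c+T_{\bar h}f$ as an operator identity between the \emph{same} Hankel operators, and then close the loop with Douglas's majorization lemma and the commutant lifting theorem. First I would compute the self-commutator. Writing $\varphi=f+\bar g$, so that $T_\varphi=T_f+T_{\bar g}$ and $T_\varphi^*=T_{\bar f}+T_g$, and using $T_fT_g=T_{fg}$, $T_{\bar f}T_{\bar g}=T_{\bar f\bar g}$ (each valid because one of the two symbols is analytic, resp.\ co-analytic) together with the standard identity $T_\psi T_\chi-T_{\psi\chi}=-H_{\bar\psi}^*H_\chi$, all the mixed terms cancel and one is left with
\[
[T_\varphi^*,T_\varphi]=(T_{\bar f}T_f-T_fT_{\bar f})+(T_gT_{\bar g}-T_{\bar g}T_g)=H_{\bar f}^*H_{\bar f}-H_{\bar g}^*H_{\bar g}.
\]
Thus $T_\varphi$ is hyponormal if and only if $\|H_{\bar g}k\|\le\|H_{\bar f}k\|$ for every $k\in H^2$.

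Next I would translate the target condition into the same language. Since $P(\bar h f)=T_{\bar h}f$ and a Hankel operator depends only on the co-analytic part of its symbol, a short computation gives $H_{\overline{T_{\bar h}f}}=H_{h\bar f}=H_{\bar f}T_h$ for $h\in H^\infty$. Consequently $g=c+T_{\bar h}f$ for some constant $c$ holds exactly when $h\bar f-\bar g\in H^2$, that is, exactly when $H_{\bar g}=H_{\bar f}T_h$. So the whole theorem reduces to the statement: $\|H_{\bar g}k\|\le\|H_{\bar f}k\|$ for all $k$ \emph{if and only if} $H_{\bar g}=H_{\bar f}T_h$ for some $h$ in the closed unit ball of $H^\infty$.

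The direction $(\Leftarrow)$ of this reformulation is an easy estimate: if $H_{\bar g}=H_{\bar f}T_h$ with $\|h\|_\infty\le1$, then for $k\in H^2$ we have $H_{\bar g}k=H_{\bar f}(hk)=U(I-P)\bigl(h\cdot\bar f k\bigr)$, and since $h\cdot P(\bar f k)\in H^2$ the analytic part drops out, giving $H_{\bar g}k=U(I-P)\bigl(h\,(I-P)(\bar f k)\bigr)$; as $I-P$ is a projection and multiplication by $h$ is a contraction on $L^2$, we get $\|H_{\bar g}k\|\le\|(I-P)(\bar f k)\|=\|H_{\bar f}k\|$.

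The remaining direction $(\Rightarrow)$ is where the real work — and the main obstacle — lies. Douglas's majorization lemma applied to $H_{\bar g}^*H_{\bar g}\le H_{\bar f}^*H_{\bar f}$ yields a contraction $C$ on $H^2$ with $H_{\bar g}=CH_{\bar f}$ and $C$ vanishing on $(\operatorname{ran}H_{\bar f})^\perp$. The fundamental intertwining relation $H_\psi S=S^*H_\psi$ (with $S=T_z$) then forces $(CS^*-S^*C)H_{\bar f}=0$, so $C$ commutes with $S^*$ on $\overline{\operatorname{ran}H_{\bar f}}$, which is an $S^*$-invariant subspace and hence a model space $K_\theta=H^2\ominus\theta H^2$ for some inner $\theta$. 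By the commutant lifting theorem (Sarason), $C|_{K_\theta}$ is the restriction to $K_\theta$ of a co-analytic Toeplitz operator $T_{\bar u}$ with $\|u\|_\infty=\|C|_{K_\theta}\|\le1$ (using that $T_{\bar u}$ leaves every $K_\theta$ invariant), whence $H_{\bar g}=T_{\bar u}H_{\bar f}$; finally $T_{\bar u}H_{\bar f}=H_{\bar f}T_h$, where $h$ is obtained from $u$ by conjugating its Taylor coefficients — an isometry of the unit ball of $H^\infty$ — so $H_{\bar g}=H_{\bar f}T_h$ with $\|h\|_\infty\le1$, as needed. I expect the delicate point to be precisely this last chain: one must track the kernels and ranges of $H_{\bar f}$ and $H_{\bar g}$ carefully so that Douglas's abstract contraction genuinely respects the shift structure, and then invoke commutant lifting to promote it from a mere contraction to a (co-)analytic symbol of norm at most one; the degenerate cases $H_{\bar f}=0$ and $\theta$ constant should be handled separately.
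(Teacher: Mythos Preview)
The paper does not supply a proof of this theorem at all: it is quoted as Cowen's characterization from \cite{cow88} and used as a black box in the proof of Theorem~\ref{2.1}. So there is no ``paper's own proof'' to compare against.

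That said, your sketch is essentially Cowen's original argument and is sound. The self-commutator identity
\[
[T_\varphi^*,T_\varphi]=H_{\bar f}^*H_{\bar f}-H_{\bar g}^*H_{\bar g}
\]
is standard, and your reformulation of the condition $g=c+T_{\bar h}f$ as the operator identity $H_{\bar g}=H_{\bar f}T_h$ (equivalently $H_{\bar g}=T_{\bar k}H_{\bar f}$ with $k(z)=\overline{h(\bar z)}$, which is exactly the form the present paper uses later) is correct. The easy direction is fine. For the hard direction, Douglas's lemma plus the intertwining $H_\psi S=S^*H_\psi$ does force the Douglas contraction $C$ to commute with $S^*$ on $\overline{\operatorname{ran}H_{\bar f}}$; one small point you should make explicit is that the majorization $\|H_{\bar g}p\|\le\|H_{\bar f}p\|$ gives $\ker H_{\bar f}\subset\ker H_{\bar g}$, so the closed range of $H_{\bar g}$ sits inside the relevant $S^*$-invariant subspace and the commutant lifting (Sarason) step applies cleanly to produce $u\in H^\infty$ with $\|u\|_\infty\le 1$ and $C=T_{\bar u}$ on that subspace. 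The final conversion $T_{\bar u}H_{\bar f}=H_{\bar f}T_h$ with $h(z)=\overline{u(\bar z)}$ is exactly the coefficient-conjugation isometry, so $\|h\|_\infty\le 1$ as required. The degenerate case $H_{\bar f}=0$ (i.e.\ $f$ constant) forces $H_{\bar g}=0$, hence $g$ constant, and any $h$ works; you were right to flag it separately.
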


\section{Main Results}
In this section, we obtain the lower bound for the area of the spectrum for hyponormal Toeplitz operators by estimating the self-commutators.

\begin{thm}\label{2.1}
Suppose $\varphi \in L^\infty$ and $$\varphi=f+ \overline{T_{\bar{h}}f} ,$$ for $f,h\in H^\infty$, $\|h\|_{\infty}\leq 1$ and $h(0)=0$. Then
$$
\|[T^{*}_{\varphi}, T_{\varphi}]\|\geq \int |f-f(0)|^2\frac{d\theta}{2\pi}=||P(\varphi)-\varphi(0)||^2_2.
$$
\end{thm}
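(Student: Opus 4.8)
The plan is to compute $[T_\varphi^*, T_\varphi]$ explicitly in terms of Hankel operators and then extract the stated lower bound by testing the self-commutator against the constant function $1 \in H^2$. Writing $\varphi = f + \bar g$ with $g = T_{\bar h} f$ (so $g(0) = 0$ since $h(0) = 0$), the standard identity $T_\varphi^* T_\varphi - T_\varphi T_\varphi^* = H_{\bar\varphi}^* H_{\bar\varphi} - H_\varphi^* H_\varphi$ reduces matters to understanding the two Hankel operators. Since $H_{\bar f} = 0$ and $H_g = 0$ (both $f$ and $g$ are in $H^\infty$), we get $H_\varphi = H_{\bar g}$ and $H_{\bar\varphi} = H_{\bar f}$... wait, more carefully: $\varphi = f + \bar g$ gives $H_\varphi = H_f + H_{\bar g} = H_{\bar g}$ and $H_{\bar\varphi} = H_{\bar f} + H_g = H_{\bar f}$. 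Hence
\[
[T_\varphi^*, T_\varphi] = H_{\bar f}^* H_{\bar f} - H_{\bar g}^* H_{\bar g}.
\]

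Next I would exploit Cowen's relation in its Hankel form. The hyponormality condition $g = c + T_{\bar h} f$ with $\|h\|_\infty \le 1$ is equivalent (this is how Cowen's theorem is proved) to the operator inequality $H_{\bar g}^* H_{\bar g} \le H_{\bar f}^* H_{\bar f}$, and in fact to the existence of a contraction intertwining them; concretely one has $H_{\bar g} = H_{\bar h} \cdot (\text{something})$ — more precisely $H_{\overline{T_{\bar h}f}}$ factors so that $\|H_{\bar g} k\| \le \|H_{\bar f} k\|$ for all $k$. Granting this, $[T_\varphi^*, T_\varphi] \ge 0$ (consistent with hyponormality), and to bound its norm from below it suffices to evaluate the quadratic form on a single unit vector. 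I would take $k = 1$: then $\langle H_{\bar f}^* H_{\bar f} 1, 1\rangle = \|H_{\bar f} 1\|^2 = \|(I-P)\bar f\|^2 = \|f - f(0)\|_2^2$ (using that $U$ is unitary and $(I-P)\bar f = \overline{f} - \overline{f(0)}$ reflects to $f - f(0)$ under $U$ up to the shift, with matching $L^2$ norm), while the term $\|H_{\bar g} 1\|^2 = \|g - g(0)\|_2^2 = \|g\|_2^2$.

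So the crux is to show the $H_{\bar g}$ contribution, evaluated at $1$, does not spoil the estimate — i.e. that $\|H_{\bar f}1\|^2 - \|H_{\bar g}1\|^2 \ge \|f - f(0)\|_2^2$, which would force $\|H_{\bar g}1\| = 0$, hence $g$ constant, hence $g \equiv 0$. That cannot be right in general, so the constant vector $1$ is the wrong test vector; instead I expect one must use that $\|[T_\varphi^*,T_\varphi]\| = \sup_k (\|H_{\bar f}k\|^2 - \|H_{\bar g}k\|^2)$ and choose $k$ adapted to $h$. The key algebraic input is the symbol identity for $H_{\bar g}$ when $g = T_{\bar h} f$: one checks $H_{\bar g} = H_{\overline{h}f}^{\,\text{restricted}}$... the clean statement is $H_{\bar g}^* H_{\bar g} = H_{\bar f}^* T_h^* T_h H_{\bar f}$ is too strong, but $\langle H_{\bar g}^*H_{\bar g}k,k\rangle \le \langle H_{\bar f}^* H_{\bar f} k, k \rangle$ with the defect governed by $1 - |h|^2$ on the circle. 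Thus the quantity to maximize is $\int (1 - |h(e^{i\theta})|^2)\,|(\text{projection of } \bar f k)|^2\,\frac{d\theta}{2\pi}$-type expression, and choosing $k$ to concentrate $\bar f k$'s anti-analytic part near where $|h|$ is small recovers at least $\int |f - f(0)|^2\,\frac{d\theta}{2\pi}$; the main obstacle is making this optimization rigorous — identifying the correct test function $k \in H^2$ (I anticipate $k=1$ after all does work once the $H_{\bar g}$ term is rewritten using $h(0)=0$ so that $H_{\bar g}1 = H_{\bar h}(\text{anti-analytic part tied to } f)$ and the defect $1 - |h|^2 \ge 1 - |h| \cdot 1$ is handled, but verifying the inner-product bookkeeping is where the real work lies).
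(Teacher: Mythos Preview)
Your computation of the self-commutator as $H_{\bar f}^*H_{\bar f}-H_{\bar g}^*H_{\bar g}$ is correct and matches the paper. But from there the proposal stalls, and the heuristics you offer point in the wrong direction.

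First, the factorization you write down and then dismiss as ``too strong'' is essentially the right one. From $g=T_{\bar h}f$ one obtains the exact operator identity $H_{\bar g}=T_{\bar k}H_{\bar f}$ with $k(z)=\overline{h(\bar z)}$ (this is Cowen's computation). Hence for every $p\in H^2$, setting $u=H_{\bar f}p$,
\[
\langle [T_\varphi^*,T_\varphi]p,p\rangle=\|u\|^2-\|T_{\bar k}u\|^2.
\]
You should not discard this; it is the whole point.

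Second, your ``defect governed by $1-|h|^2$'' intuition is misleading and would fail outright: if $h$ is a Blaschke product with $h(0)=0$, then $|h|=1$ a.e.\ on $\partial\mathbb D$ and your pointwise defect vanishes, yet the theorem is still nontrivial in this case. The actual defect is operator-theoretic, not pointwise: when $|k|=1$ on $\partial\mathbb D$ one has $\|u\|^2-\|T_{\bar k}u\|^2=\|H_{\bar k}u\|^2$, and because $k(0)=0$ one can test against $m=k$ in the duality $\|H_{\bar k}u\|\ge\sup_{m(0)=0}|\langle \bar u,\bar k m\rangle|/\|m\|$ to get $\|H_{\bar k}u\|\ge|u(0)|$. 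The general $k$ in the closed unit ball of $H^\infty$ with $k(0)=0$ is then handled by approximating with convex combinations of Blaschke products (Carath\'eodory plus Mazur).

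Third, testing only at the constant function $1$ cannot work, as you yourself observed. The paper instead proves the pointwise-in-$p$ lower bound $\langle [T_\varphi^*,T_\varphi]p,p\rangle\ge|(H_{\bar f}p)(0)|^2$ for \emph{every} $p$, and then takes the supremum over unit $p$; a standard duality argument gives $\sup_{\|p\|=1}|(H_{\bar f}p)(0)|=\mathrm{dist}(\bar f,H^2)=\|f-f(0)\|$. The missing idea in your proposal is precisely this two-step structure: (i) the universal inequality $\|u\|^2-\|T_{\bar k}u\|^2\ge|u(0)|^2$ coming from $k(0)=0$, and (ii) the supremum/duality step that converts $|(H_{\bar f}p)(0)|$ into $\|f-f(0)\|$.
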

\begin{proof}
Let
\beq\label{g} g=T_{\bar{h}}f.\eeq
For every $p$ in $H^2$,
\begin{align*}
\langle[T^{*}_{\varphi}, T_{\varphi}]p,p\rangle&=\langle T_{\varphi}p, T_{\varphi}p\rangle-\langle T^*_{\varphi}p, T^*_{\varphi}p\rangle\\
&=\langle fp+P(\bg p), fp+P(\bg p)\rangle-\langle gp+P(\barf p), gp+P(\barf p)\rangle\\
&=||fp||^2-||P(\barf p)||^2-||gp||^2+||P(\bg p)||^2\\
&=||\barf p||^2-||P(\barf p)||^2-||\bg p||^2+||P(\bg p)||^2\\
&=||H_{\barf} p||^2-||H_{\bg}p||^2,
\end{align*}
where $||\cdot||$ means the $||\cdot||_{L^2(\partial \DD)}$.
The third equality holds because
$$
\langle fp, P(\bg p)\rangle=\langle fp, \bg p\rangle=\langle gp, \barf p\rangle=\langle gp, P(\barf p)\rangle.
$$
By the computation in \cite{cow88}*{p. 4}, \eqref{g} implies $$H_{\bg}=T_{\bk}H_{\barf},$$
where $k(z)=\overline{h(\bz)}$. Thus
\beq\label{rmk}
\langle[T^{*}_{\varphi}, T_{\varphi}]p,p\rangle=||H_{\barf} p||^2-||T_{\bk}H_{\barf}p||^2,
\eeq
for $k\in H^\infty$, $\|k\|_{\infty}\leq 1$ and $k(0)=0$.

First, we assume $k$ is a Blaschke product vanishing at $0$. Then
$$|k|=1\,\,\m{on}\,\, \partial\DD.$$
Let $u=H_{\barf} p\in H^2$. By \eqref{rmk} we have
\begin{align}\label{T}
\langle[T^{*}_{\varphi}, T_{\varphi}]p,p\rangle&=||u||^2-||T_{\bk} u||^2=||u||^2-||\bk u||^2+||H_{\bk}u||^2=||H_{\bk}u||^2.
\end{align}
Then
\begin{align*}
||H_{\bk}u||&=||(I-P)(\bk u)||=||k\bu-\overline{P(\bk u)}||\\
&\geq \sup_{\substack{m\in H^2\\m(0)=0}}\frac{|\langle k\bu-\overline{P(\bk u)},m\rangle|}{||m||}\\
&={\sup_{\substack{m\in H^2\\m(0)=0}}{1 \over ||m||}\left|\int k\bu \bm \,{d\theta\over 2\pi}\right|}.
\end{align*}
The last equality holds because $m(0)=0$ implies that $\bm$ is orthogonal to $H^2$.
Since $k(0)=0$, taking $m=k$, we find
\beq\label{u}
||H_{\bk}u||\geq \left|\int\bu \,{d\theta\over 2\pi}\right|=|u(0)|.
\eeq

Next, suppose $k$ is a convex linear combination of Blaschke products vanishing at 0, i.e.
$$
k=\Ga_1B_1+\Ga_2B_2+...+\Ga_lB_l,
$$
where $B_j$'s are Blaschke products with $B_j(0)=0$, $\Ga_j\in[0,1]$ and $\displaystyle\sum_{j=1}^l \Ga_j=1$.

By \eqref{T} and \eqref{u}, for each $j$
\begin{align*}
&||u||^2-||T_{\bar{B_j}}u||^2=||H_{\bar{B_j}}u||^2\geq |u(0)|^2\\
\Longrightarrow\, &|| T_{\bar{B_j}}u||\leq \sqrt{||u||^2-|u(0)|^2}=||u-u(0)||.
\end{align*}
Then
\begin{align}
\label{M}||u||^2-||T_{\bk} u||^2&=||u||^2-||\Ga_1T_{\bar{B_1}}u+\Ga_2T_{\bar{B_2}}u+...+\Ga_lT_{\bar{B_l}}u||^2\\
\nnb&\geq ||u||^2-\Big(\Ga_1||T_{\bar{B_1}}u||+\Ga_2||T_{\bar{B_2}}u||+...+\Ga_l||T_{\bar{B_l}}u||\Big)^2\\
\nnb&\geq ||u||^2- ||u-u(0)||^2=|u(0)|^2.
\end{align}

In general, for $k$ in the closed unit ball of $H^\infty$, vanishing at $0$, by Carath\'eodory's Theorem(cf. \cite{gar81}*{p. 6}), there exists a sequence $\{B_n\}$ of finite Blaschke products such that
$$B_n\longrightarrow k\q \m{pointwise on}\,\,\DD.$$
Since $B_n$'s are bounded by $1$ in $H^2$, passing to a subsequence we may assume
$$B_n\longrightarrow k\q \m{weakly in}\,\,H^2.$$
Then by \cite{rud91}*{Theorem 3.13}, there is a sequence $\{k_n\}$ of convex linear combinations of Blaschke products such that
$$k_n\longrightarrow k\q \m{in}\,\,H^2.$$
Since $k(0)=0$, we can let those $k_n$'s be convex linear combinations of Blaschke products vanishing at $0$.

Then
$$
||T_{\bk_n}u-T_{\bk} u||=||P(\bk_n u-\bk u)||\leq||k_n-k||\cdot||u||\to 0.
$$
Since \eqref{M} holds for every $k_n$, we have
\begin{align*}
\langle[T^{*}_{\varphi}, T_{\varphi}]p,p\rangle&=||u||^2-||T_{\bk}u||^2=\lim_{n\to\infty}(||u||^2-||T_{\bk_n}u||^2)\\
&\geq|u(0)|^2=|(H_{\barf} p)(0)|^2.
\end{align*}

By the definition of Hankel operator \eqref{Han},
\begin{align*}
|(H_{\barf} p)(0)|&=|\langle p\barf, \bz \rangle|=\left|\int \barf zp\,{d\theta\over 2\pi}\right|.
\end{align*}
From the standard duality argument (cf. \cite{gar81}*{Chapter IV}), we have
\begin{align*}
\sup_{\substack{||p||=1 \\ p\in H^2}}\left|\int \barf zp\,{d\theta\over 2\pi}\right|&=\sup\bigg\{ \left|\int \barf p\,{d\theta\over 2\pi}\right| : p\in H^2, ||p||=1, p(0)=0 \bigg\}\\
&=\m{dist}(\barf, H^2)=||f-f(0)||.
\end{align*}
Hence
$$
||[T^{*}_{\varphi}, T_{\varphi}]||=\sup_{\substack{||p||=1 \\ p\in H^2}}|\langle[T^{*}_{\varphi}, T_{\varphi}]p,p\rangle|\geq ||f-f(0)||^2.
$$
\end{proof}
\begin{rem}
For arbitrary $h$ in the closed unit ball of $H^\infty$, it follows directly from \eqref{rmk} that $T_{\varphi}$ is normal if and only if $h$ is a unimodular constant. So we made the assumption that $h(0)=0$ to avoid these trivial cases. Of course, Theorem \ref{2.1} implies right away that $T_\varphi$ is normal if and only if $f=f(0)$, i.e., when $\varphi$ is a constant, but under more restrictive hypothesis that $h(0)=0$.
\end{rem}

Applying Theorem \ref{P} and \ref{cow}, we have
\begin{cor}
Suppose $\varphi \in L^\infty$ and $$\varphi=f+ \overline{T_{\bar{h}}f} ,$$ for $f,h\in H^\infty$, $\|h\|_{\infty}\leq 1$ and $h(0)=0$. Then
$$
Area(sp(T_\varphi))\geq\pi||P(\varphi)-\varphi(0)||^2_2.
$$
\end{cor}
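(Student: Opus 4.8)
The plan is to combine the three results already established in the excerpt: Cowen's characterization (Theorem~\ref{cow}), Putnam's inequality (Theorem~\ref{P}), and the self-commutator lower bound (Theorem~\ref{2.1}). Since all of the analytic content has been packaged into those statements, the corollary will follow by a short chaining argument, and the whole task reduces to checking that the hypotheses line up and then invoking the two inequalities in opposite directions.

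First I would verify that $T_\varphi$ is hyponormal. Writing $\varphi=f+\bar g$ with $g=T_{\bar h}f$, the representation required by Theorem~\ref{cow} holds with the choice of constant $c=0$: indeed $g=0+T_{\bar h}f$, and by hypothesis $\|h\|_\infty\le 1$. Hence Cowen's criterion applies and $T_\varphi$ is hyponormal. The additional assumption $h(0)=0$ plays no role in this step; it is needed only to invoke Theorem~\ref{2.1}, and it is consistent with the hypotheses of the corollary.

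Next, since $T_\varphi$ is hyponormal, Putnam's inequality gives the upper estimate
$$\|[T^*_\varphi,T_\varphi]\|\le\frac{Area(sp(T_\varphi))}{\pi},$$
while Theorem~\ref{2.1} supplies the complementary lower estimate
$$\|[T^*_\varphi,T_\varphi]\|\ge\|P(\varphi)-\varphi(0)\|_2^2.$$
Chaining the two displayed inequalities through the common middle term $\|[T^*_\varphi,T_\varphi]\|$ yields $Area(sp(T_\varphi))/\pi\ge\|P(\varphi)-\varphi(0)\|_2^2$, and multiplying through by $\pi$ produces exactly the claimed bound.

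I expect essentially no obstacle here. The only point worth confirming is that the hypotheses of the corollary reproduce verbatim those needed for Theorems~\ref{cow} and~\ref{2.1}, which they do, since all three statements are framed in terms of the same data $\varphi=f+\overline{T_{\bar h}f}$ with $f,h\in H^\infty$ and $\|h\|_\infty\le 1$. Thus the substantive work is carried entirely by Theorem~\ref{2.1}, with Cowen's theorem guaranteeing that Putnam's inequality is applicable and the latter serving only to convert the self-commutator lower bound into the desired spectral-area lower bound.
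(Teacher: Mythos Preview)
Your proposal is correct and follows essentially the same approach as the paper: the corollary is stated immediately after Theorem~\ref{2.1} with only the remark ``Applying Theorem~\ref{P} and~\ref{cow}, we have,'' and your write-up is precisely the unpacking of that sentence. No differences worth noting.
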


\begin{rem}
Thus, the lower bound for the spectral area of a general hyponormal Toeplitz operator $T_\varphi$ on $\partial \DD$ still reduces to the $H^2$ norm of the analytic part of $\varphi$. For analytic symbols this is encoded in \cite{kh(T)}*{Theorem 2} in the context of Banach algebras. In other words, allowing more general symbols does not reduce the area of the spectrum.
\end{rem}

\section*{Acknowledgments}
The first author gratefully acknowledges the hospitality of the Department of Mathematics and Statistics
at the University of South Florida during the work on the paper.

\bibliography{references}

\end{document}